\documentclass[preprint]{elsarticle}
\usepackage{amssymb}
\usepackage{amsmath}
\usepackage{amsfonts}
\usepackage{color}
\usepackage{ulem}

\newtheorem{theorem}{Theorem}[section]

\newtheorem{corollary}[theorem]{Corollary}

\newtheorem{lemma}[theorem]{Lemma}

\numberwithin{equation}{section}
\newenvironment{proof}[1][Proof]{\noindent \textbf{#1} }{\  \rule{0.5em}{0.5em}}
\definecolor{c100}{rgb}{0.5,0,.1}

\journal{...}

\begin{document}

	\begin{frontmatter}
		
		\title{Korovkin type theorems for operators acting on functions of polynomial and exponential growth on $[0,\infty)$
		}

		\author[1]{Ulrich Abel} 
		\author[2]{Ana Maria Acu}
		\author[3]{Margareta Heilmann}
		\author[4]{Ioan Ra\c sa}

		\vspace{10cm}
		
		% \author[]{\hspace{10.5cm} }

		\address[1] {Technische Hochschule Mittelhessen, Department Mathematik, Naturwissenschaften und Datenverarbeitung, Friedberg, Germany, 
		e-mail: ulrich.abel@mnd.thm.de }
	\address[2]{Lucian Blaga University of Sibiu, Department of Mathematics and Informatics, Sibiu,  Romania, e-mail: anamaria.acu@ulbsibiu.ro}
	\address[3]{University of Wuppertal, School of Mathematics and Natural Sciences, Wuppertal,  Germany, e-mail: heilmann@math.uni-wuppertal.de}
	\address[4]{Technical University of Cluj-Napoca, Department of Mathematics, Cluj-Napoca, Romania,
		e-mail:  ioan.rasa@math.utcluj.ro }

		\begin{abstract} 	
			{We prove two Korovkin-type approximation theorems for sequences of positive linear operators acting on continuous functions on $[0,\infty)$. Under the assumption of pointwise convergence on suitable test functions, we establish pointwise convergence for all functions with polynomial or exponential growth.  As direct applications, we obtain convergence results for the classical Baskakov and Szász--Mirakjan operators. The proposed method offers an elementary framework that can be applied to a broad class of positive linear operators.				
			} 
		\end{abstract}
		
		\begin{keyword} Korovkin-type theorem, positive linear operators, polynomial growth, exponential growth
			
			\MSC[2020]  41A36.
		\end{keyword} 
		
	\end{frontmatter}
	
%	\fbox{\fbox{{\large \jobname.tex}}}
	
%	\hskip8.5cm ~ {\large \today}

\section{Introduction}

Let $L_n : D \to C[0,\infty)$ be a sequence of positive linear operators, where $D$ is a linear subspace of $C[0,\infty)$ containing the space $\Pi$ of all polynomial functions. For example, if $V_n$ is the sequence of Baskakov operators, then $D$ can be taken as the subspace of $C[0,\infty)$ consisting of all functions with at most polynomial growth. In the case of Sz\'asz--Mirakjan operators $S_n$, polynomial growth should be replaced by exponential growth.

For operators $L_n$ as above, many results are known, guaranteeing a certain type of convergence of $L_n f$ toward $f$, for $f$ in suitably restricted subspaces of $D$. In this paper we give sufficient conditions entailing the pointwise convergence of $L_n f$ toward $f$, for {\it all} functions $f \in D$.

\section{Convergence for functions with polynomial growth}

\begin{theorem}\label{T16}Let $D$ be a linear subspace of $C[0,\infty)$, $\Pi \subset D$, $L_n : D \to C[0,\infty)$  positive linear operators, $L_n p(x) \to p(x)$, for all $ p \in \Pi$, $x \geq 0$. Let $f \in D$ such that there exists $ p \in \Pi$ with $|f(x)| \leq p(x)$, $x \geq 0$. Then $L_n f(a) \to f(a)$, $a \geq 0$.\end{theorem}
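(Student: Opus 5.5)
Fix $a\ge 0$ and $\varepsilon>0$. The plan is a localization argument: split $f$ into a part near $a$, handled by uniform continuity, and a part away from $a$, controlled by a polynomial that beats $p$ at infinity. The test-function hypothesis is applied only at the single point $a$.

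First, by continuity of $f$ at $a$ choose $\delta>0$ such that $|f(t)-f(a)|\le\varepsilon$ for $t\ge0$ with $|t-a|\le\delta$. For $|t-a|>\delta$ we estimate
\[
|f(t)-f(a)|\le p(t)+|f(a)|.
\]
Since $p$ is a polynomial, say of degree $m$, there is a constant $C>0$ (depending on $a,\delta,\varepsilon$) such that
\[
p(t)+|f(a)|\le C\,(t-a)^{2m+2}\quad\text{for } |t-a|>\delta,\ t\ge0 .
\]
To see this, note that the ratio $(p(t)+|f(a)|)/(t-a)^{2m+2}$ is continuous on the closed set $\{t\ge0:|t-a|\ge\delta\}$ and tends to $0$ as $t\to\infty$, so it is bounded there. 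Combining both regions gives, for all $t\ge0$,
\[
|f(t)-f(a)|\le \varepsilon + C\,(t-a)^{2m+2}.
\]

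Next, define the polynomial $q_a(t)=(t-a)^{2m+2}\in\Pi$. By positivity and linearity of $L_n$, with $e_0(t)=1$, we get
\[
|L_nf(a)-f(a)|\le L_n(|f-f(a)|)(a)+|f(a)|\,|L_ne_0(a)-1|\le \varepsilon L_ne_0(a)+C\,L_nq_a(a)+|f(a)|\,|L_ne_0(a)-1|.
\]
Here $q_a$ is a fixed polynomial because $a$ is fixed, so the hypothesis gives $L_nq_a(a)\to q_a(a)=0$ and $L_ne_0(a)\to1$. Hence $\limsup_n|L_nf(a)-f(a)|\le\varepsilon$, and letting $\varepsilon\to0$ finishes the proof. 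We also use that $|f|\le p$ with $f\in D$ and $p\in\Pi\subset D$, so every function the operators are applied to lies in $D$. Strictly, $|f-f(a)|$ may not be in $D$. The argument avoids this by using monotonicity in the form $-g\le f-f(a)e_0\le g$ with $g=\varepsilon e_0+Cq_a\in\Pi$, which yields $|L_n(f-f(a)e_0)(a)|\le L_ng(a)$.

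The main obstacle is the domination step. One needs a single polynomial that vanishes at $a$ and dominates $|f-f(a)|$ away from a neighbourhood of $a$. Choosing the exponent $2m+2$, which is even and exceeds $\deg p$, handles both the growth at infinity and the positivity requirement. The rest is routine.
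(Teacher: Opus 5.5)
Your proof is correct and is essentially the paper's argument: your bounds $f(a)e_0\pm g$, with $g=\varepsilon e_0+C(t-a)^{2m+2}$, are exactly the polynomials $P$ and $Q$ of the form $f(a)\pm\bigl(\varepsilon+K((x-a)/\ell)^{2m}\bigr)$ that the paper's lemma constructs, and both proofs then use positivity together with $L_nP(a)\to P(a)$ and $L_nQ(a)\to Q(a)$ at the single point $a$. The only difference is cosmetic: you obtain the constant from one boundedness-of-the-ratio argument instead of the paper's split into the region near $a$, the compact set $U$, and $x>M$; also, drop the intermediate term $L_n(|f-f(a)|)(a)$ from your display, since your sandwich $-g\le f-f(a)e_0\le g$ already does the job.
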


For the proof we need the following lemma.

\begin{lemma}\label{L6.1} Let $f \in C[0,\infty)$, $p \in \Pi$, $p(x) \geq f(x)$, $x \in [0,\infty)$.
	Let $a \geq 0$, $\varepsilon > 0$. Then, there exists  $P \in \Pi$:
	\begin{equation}\label{equ1}
		P(a) = f(a) + \varepsilon, \quad P(x) \geq f(x), \quad x \geq 0.
	\end{equation}
\end{lemma}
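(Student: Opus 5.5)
The plan is to write down $P$ explicitly as $f(a)+\varepsilon$ plus a nonnegative correction term. The correction vanishes at $a$ and, away from $a$, dominates the majorant $p$. Continuity of $f$ handles a neighbourhood of $a$; the hypothesis $p \geq f$ handles the rest of $[0,\infty)$.

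\textbf{Step 1 (local control).} Since $f$ is continuous at $a$, choose $\delta>0$ such that $f(x) < f(a)+\varepsilon$ for all $x\ge 0$ with $|x-a|<\delta$.

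\textbf{Step 2 (a nonnegative polynomial majorant).} Put
\[
r(x) := p(x)^2 + 1 + |f(a)|, \qquad x \geq 0 .
\]
Then $r\in\Pi$ and $r\ge 0$. Since $p^2+1 \ge 2|p| \ge |p|$, we also have $r(x)\ge |p(x)|+|f(a)|$.

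\textbf{Step 3 (definition of $P$).} Set
\[
P(x) := f(a)+\varepsilon + \frac{(x-a)^2}{\delta^2}\, r(x).
\]
This is a polynomial, and clearly $P(a)=f(a)+\varepsilon$.

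\textbf{Step 4 (verification of $P\ge f$).} Split $[0,\infty)$ into two regions.
\begin{itemize}
\item If $|x-a|<\delta$, the correction term is nonnegative. Step 1 then gives $P(x)\ge f(a)+\varepsilon> f(x)$.
\item If $|x-a|\ge\delta$, then $(x-a)^2/\delta^2\ge 1$ and $r\ge 0$. Hence
\[
P(x)\ge f(a)+\varepsilon+|p(x)|+|f(a)| \ge |p(x)| \ge p(x)\ge f(x).
\]
\end{itemize}
This yields \eqref{equ1}.

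The only point needing care is Step 2. The correction term must be nonnegative everywhere, so that it does no harm near $a$. It must also absorb both the possibly negative constant $f(a)$ and the possibly sign-changing $p$. Replacing $p$ by $p^2+1+|f(a)|$ resolves this.
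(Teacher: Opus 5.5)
Your proof is correct, and it builds the correction term differently from the paper. The paper uses a single centered even power, $P(x)=K\bigl(\frac{x-a}{\ell}\bigr)^{2m}+f(a)+\varepsilon$ with $2m>\deg p$, and controls three regions separately. Near $a$ it uses continuity. For $x>M$ it uses a growth comparison: the power eventually dominates $p$. On the compact middle set $U=[0,a-2\ell]\cup[a+2\ell,M]$ it chooses $K$ so that $\min_U P\ge\max_U p$. It also treats $a=0$ only as ``similar''. You instead factor the correction into a localizer $(x-a)^2/\delta^2$, which vanishes at $a$ and is at least $1$ off $(a-\delta,a+\delta)$, times a globally nonnegative majorant $r=p^2+1+|f(a)|$ built from $p$ itself. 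So only two regions remain, with no limit at infinity and no compactness constant, and $a=0$ is covered uniformly. The paper's version gains a fixed shape $K\,t^{2m}+\mathrm{const}$, which it reuses in Lemma~3.2 by replacing $t^{2m}$ with $\alpha^m(t)$. Your $P$ has a $p$-dependent form of degree about $2\deg p+2$, which is harmless for Theorem~\ref{T16}, since every polynomial is an admissible test function. Your idea also transfers to the exponential setting. The function $f(a)+\varepsilon+\alpha\bigl(\frac{x-a}{\delta}\bigr)\bigl(A\exp_\omega(x)+|f(a)|\bigr)/\alpha(1)$ is a finite linear combination of exponentials. Since $\alpha$ is even, vanishes at $0$ and increases on $[0,\infty)$, the same two-region argument shows it majorizes $f$, again without any growth comparison at infinity.
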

\begin{proof} Let $a>0$. Then, there exists $ 0 < \ell \leq \dfrac{a}{2}$, such that
	\begin{equation}\label{equ2}f(a) + \varepsilon \geq f(x), \,\,x \in (a - 2\ell, a + 2\ell).\end{equation}

	Let $m \in \mathbb{N}$, $2m > \deg p$. Then, there exists $ M > 0$ with
	\[
	\left( \frac{x - a}{\ell} \right)^{2m} + f(a) + \varepsilon \geq p(x), \quad x > M,
	\]
	which implies that
	\begin{equation}\label{equ3}
		\left( \frac{x - a}{\ell} \right)^{2m} + f(a) + \varepsilon \geq f(x), \quad x > M.
	\end{equation}
	
	Set $U := [0, a - 2\ell] \cup [a + 2\ell, M]$. Take $K \geq 1$ with
	\begin{equation}\label{equ4}
		\min \left\{ K \left( \frac{x - a}{\ell} \right)^{2m} + f(a) + \varepsilon \mid x \in U \right\}
		\geq
		\max \left\{ p(x) \mid x \in U \right\}.
	\end{equation}
	
	Define
	\[
	P(x) := K \left( \frac{x - a}{\ell} \right)^{2m} + f(a) + \varepsilon.
	\]
	
	Using (\ref{equ2})--(\ref{equ4}) we see that $P$ satisfies (\ref{equ1}). The proof is similar for  $a = 0$. \end{proof}

\medskip

\begin{proof}[Proof of Theorem~\protect\ref%
	{T16}]  
	Let $a \geq 0$, $\varepsilon > 0$. Lemma \ref{L6.1} implies the existence of $ P, Q \in \Pi$, such that
	\[
	Q(x) \leq f(x) \leq P(x), \quad x \geq 0; \quad Q(a) = f(a) - \varepsilon,\ \ P(a) = f(a) + \varepsilon.
	\]
	Now
	\[
	L_n Q(a) \leq L_n f(a) \leq L_n P(a);
	\quad f(a) - \varepsilon = Q(a) = \lim_{n\to \infty} L_n Q(a)
	\]
	\[
	\leq \liminf_{n\to\infty} L_n f(a) \leq \limsup_{n\to\infty} L_n f(a)
	\leq \lim_{n\to\infty} L_n P(a) = P(a) = f(a) + \varepsilon. 
	\]
	Therefore, for each $\varepsilon > 0$ we have 
	\[
	f(a) - \varepsilon 
	\leq \liminf_{n\to\infty} L_n f(a) \leq \limsup_{n\to\infty} L_n f(a)
	\leq  f(a) + \varepsilon. 
	\]
	It follows that $L_n f(a) \to f(a)$ as $n \to \infty$.
\end{proof}

\begin{corollary}
	Let $V_n$ be the classical Baskakov operators and $S_n$ the Sz\'asz-Mirakjan operators. Let $f \in C[0,\infty)$ such that there exists $ p \in \Pi$ with $|f(x)| \leq p(x)$, $x \geq 0$.
	Then, $V_n f \to f$ and $S_n f \to f$, pointwise on $[0,\infty)$.\end{corollary}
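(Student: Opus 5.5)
The plan is to apply Theorem~\ref{T16} twice, once with $L_n = V_n$ and once with $L_n = S_n$. In both cases $D$ is the subspace of $C[0,\infty)$ of functions with at most polynomial growth. This space contains $\Pi$ and every $f$ in the statement. So the only hypothesis left to check is $L_n p(x)\to p(x)$ for every $p\in\Pi$ and every $x\ge 0$.

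First I would record that both operator families are well defined and positive on $D$. For
\[
V_n f(x)=\sum_{k\ge0}\binom{n+k-1}{k}\frac{x^k}{(1+x)^{n+k}}f\!\left(\frac kn\right)
\]
and
\[
S_n f(x)=e^{-nx}\sum_{k\ge0}\frac{(nx)^k}{k!}f\!\left(\frac kn\right),
\]
the series converge absolutely whenever $|f(t)|\le C(1+t)^r$. This follows from the ratio test, since for fixed $x$ the weights decay geometrically (Baskakov) or factorially (Sz\'asz--Mirakjan), while $(1+k/n)^r$ grows only polynomially. Each result is a continuous function of $x$, and positivity and linearity are clear.

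By linearity it suffices to prove $L_n e_r(x)\to x^r$ for the monomials $e_r(t)=t^r$, $r\ge0$. The key step is the standard moment structure. For both families, $L_n e_r(x)$ is a polynomial in $x$ of degree $r$ with leading coefficient tending to $1$, and all other coefficients are $O(1/n)$.
\begin{itemize}
\item For $S_n$ I would use $\sum_k k^{\underline{j}} (nx)^k e^{-nx}/k! = (nx)^j$ for falling factorials $k^{\underline j}$. Writing $k^r=\sum_j S(r,j)k^{\underline j}$ with Stirling numbers of the second kind gives $S_n e_r(x)=\sum_{j\le r} S(r,j)\, n^{j-r}x^j$.
\item For $V_n$ the analogous identity is $\sum_k k^{\underline j}\binom{n+k-1}{k}x^k(1+x)^{-n-k}= n(n+1)\cdots(n+j-1)\,x^j$, obtained by differentiating the negative binomial series $j$ times. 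Hence $V_n e_r(x)=\sum_{j\le r}S(r,j)\,n^{-r}\,n(n+1)\cdots(n+j-1)\,x^j$.
\end{itemize}
In both formulas the coefficient of $x^r$ tends to $1$ and the lower coefficients tend to $0$. So the convergence holds for every fixed $x\ge0$, including $x=0$, where $L_n e_r(0)=\delta_{r0}$.

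The only step that needs care is the moment computation, and specifically the falling-factorial identity for Baskakov. I would verify it via the generating function $\sum_k\binom{n+k-1}{k}u^k=(1-u)^{-n}$ with $u=x/(1+x)$, differentiating $j$ times and multiplying by $u^j(1+x)^{-n}$. Everything else is bookkeeping. Theorem~\ref{T16} then yields $V_nf(a)\to f(a)$ and $S_nf(a)\to f(a)$ for all $a\ge0$.
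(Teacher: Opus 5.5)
Your proposal is correct and follows the same route as the paper: take $D$ to be the functions of polynomial growth, check $L_n p\to p$ on $\Pi$, and invoke Theorem~\ref{T16}. The only difference is how the convergence on polynomials is obtained. The paper cites Ismail's results on exponential-type operators, whereas you verify it directly through the factorial-moment identities and Stirling numbers, which is a correct and self-contained substitute.
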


\begin{proof}
The Baskakov and the Sz\'asz–Mirakjan operators are exponential-type operators (see  \cite[Theorem 3.3]{Ismail}). From \cite[Proposition 2.9]{Ismail} it follows that $V_n p \to p$ and $S_n p \to p$ pointwise on $[0,\infty)$, for all $p \in \Pi$. An application of Theorem \ref{T16} concludes the proof.
	\end{proof}

\section{Convergence for functions with exponential growth}
Let $\exp_{\lambda}(x)=e^{\lambda x}$, $\lambda,x\in{\mathbb R}$. Consider also the function $\alpha(x)=\cosh(x)-1$, $x\in {\mathbb R}$.

\begin{theorem}\label{T5.8} Let \(D\) be a linear subspace of \(C[0,\infty)\) such that \(\exp_\lambda \in D\), \(\lambda \in \mathbb{R}\).
	Let \(L_n : D \to C[0,\infty)\) be positive linear operators, \(L_n \exp_\lambda \to \exp_\lambda\) pointwise on \([0,\infty)\), \(\lambda \in \mathbb{R}\).
	Let \(f \in C[0,\infty)\), \(|f(x)| \le A  \exp_{\omega}(x),\, x\ge 0\) for some \(A>0\) and \(\omega>0\).
	Then \(L_n f \to f\) pointwise on \([0,\infty)\).
\end{theorem}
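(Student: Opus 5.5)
We follow the pattern of the proof of Theorem~\ref{T16}, replacing polynomials by finite linear combinations of exponentials $\exp_\lambda$. Let $E$ denote the linear span of $\{\exp_\lambda:\lambda\in\mathbb{R}\}$. By linearity, $L_n g\to g$ pointwise for every $g\in E$. It therefore suffices to prove an analogue of Lemma~\ref{L6.1}: for every $a\ge 0$ and $\varepsilon>0$ there exist $P,Q\in E$ with
\[
Q(x)\le f(x)\le P(x),\quad x\ge 0;\qquad Q(a)=f(a)-\varepsilon,\quad P(a)=f(a)+\varepsilon .
\]
Once $P$ and $Q$ are available, positivity gives $L_nQ(a)\le L_nf(a)\le L_nP(a)$. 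The $\liminf/\limsup$ argument from the proof of Theorem~\ref{T16} then yields $L_nf(a)\to f(a)$ verbatim. Replacing $f$ by $-f$ turns the construction of $Q$ into that of $P$, so only $P$ needs to be built.

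We build $P$ from the function $\alpha(x)=\cosh x-1$ introduced before the theorem. Fix $a\ge0$ and $\varepsilon>0$. For $\mu>0$ and $K\ge1$ set
\[
P(x):=K\,\alpha\bigl(\mu(x-a)\bigr)+f(a)+\varepsilon
=\tfrac{K}{2}e^{-\mu a}\exp_{\mu}(x)+\tfrac{K}{2}e^{\mu a}\exp_{-\mu}(x)-K+f(a)+\varepsilon .
\]
This lies in $E$, since the constant equals $\exp_0$. Because $\alpha(0)=0$, we get $P(a)=f(a)+\varepsilon$, and $\alpha\ge0$ is even and increasing in $|t|$. We choose the parameters in three steps.

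\emph{Near $a$.} By continuity of $f$, pick $\ell>0$ with $f(x)\le f(a)+\varepsilon$ for $|x-a|<2\ell$, $x\ge0$. On this set $P\ge f(a)+\varepsilon\ge f$ holds automatically. The case $a=0$ is handled by the same construction, using only the right neighbourhood.

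\emph{At infinity.} Take $\mu>\omega$. Then
\[
\frac{\alpha(\mu(x-a))}{e^{\omega x}}\to\infty \qquad (x\to\infty),
\]
so there is $M>a+2\ell$ with $\alpha(\mu(x-a))+f(a)+\varepsilon\ge Ae^{\omega x}\ge f(x)$ for $x>M$. The same inequality then holds with any $K\ge1$ in front of $\alpha$.

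\emph{On the compact remainder.} On $U:=[0,a-2\ell]\cup[a+2\ell,M]$ we have $\alpha(\mu(x-a))\ge\alpha(2\mu\ell)>0$, while $f\le Ae^{\omega M}$. Hence a sufficiently large $K$ gives $P\ge f$ on $U$.

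The main obstacle is the growth comparison at infinity: the dominating function must lie in $E$, vanish to the right order at $a$, and beat $Ae^{\omega x}$. The choice $\alpha(\mu(x-a))$ with $\mu>\omega$ resolves all three requirements at once, which is presumably why $\alpha$ was introduced. Everything else is the same bookkeeping as in Lemma~\ref{L6.1}.
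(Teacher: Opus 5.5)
Your proof is correct and follows the paper's own argument. The paper likewise builds a majorant in the span of exponentials from $\alpha=\cosh-1$ using the same three regions (near $a$, the compact remainder $U$, and $x>M$), and closes with the same $\liminf/\limsup$ sandwich. The only cosmetic difference is that you use $K\alpha(\mu(x-a))$ with $\mu>\omega$, where the paper uses $K\alpha^m((x-a)/\ell)$ with $m>\omega\ell$. Like the paper, you tacitly need $f\in D$ so that $L_nf$ is defined.
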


For the proof we need the following lemma.

	\begin{lemma} Let \(A>0\), \(\omega>0\), \(f \in C[0,\infty)\), such that
	\[
	A \exp_{\omega}(x)  \ge f(x), \quad x \ge 0.
	\]
	Fix \(a \ge 0\), \(\varepsilon > 0\). Then there exist \(\ell>0\), \(K \in \mathbb{N}\), \(m \in \mathbb{N}\) such that
	\begin{equation}
		K\, \alpha^{m}\!\left(\frac{x-a}{\ell}\right) + f(a) + \varepsilon \ge f(x), \quad x \ge 0. \label{ew1}
\end{equation}\end{lemma}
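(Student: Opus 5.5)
The plan is to copy the three-region argument of Lemma \ref{L6.1}. The polynomial $\left(\frac{x-a}{\ell}\right)^{2m}$ is replaced by $\alpha^m\left(\frac{x-a}{\ell}\right)$, where $\alpha(t)=\cosh t-1\ge 0$ vanishes only at $t=0$ and grows like $\frac12 e^{|t|}$. We treat a neighbourhood of $a$, a compact set away from $a$, and a neighbourhood of infinity separately. The constants are chosen in a fixed order: first $\ell$, then $m$, then $K$.

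First, continuity of $f$ at $a$ gives $\ell>0$ with $f(x)\le f(a)+\varepsilon$ for $x\ge 0$, $|x-a|<2\ell$. If $a>0$ we also take $\ell\le a/2$; for $a=0$ the left piece below is simply empty. Since $K\alpha^m\ge 0$, inequality \eqref{ew1} holds on this neighbourhood for every $K$ and $m$.

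Second, the tail. For large $x$,
\[
\alpha\left(\frac{x-a}{\ell}\right)\ge \frac14 e^{(x-a)/\ell},
\qquad\text{so}\qquad
\alpha^m\left(\frac{x-a}{\ell}\right)\ge 4^{-m}e^{m(x-a)/\ell}.
\]
Choose $m\in\mathbb{N}$ with $m/\ell>\omega$. Then $4^{-m}e^{m(x-a)/\ell}\ge A e^{\omega x}+|f(a)|+\varepsilon$ for all $x>M$, with $M\ge a+2\ell$ suitably large. Using $f(x)\le Ae^{\omega x}$, this gives \eqref{ew1} for $x>M$ already with $K=1$, and hence for every $K\ge 1$.

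Third, the compact set $U=[0,a-2\ell]\cup[a+2\ell,M]$. On $U$ we have $|x-a|/\ell\ge 2$, so $\alpha^m\left(\frac{x-a}{\ell}\right)\ge \alpha(2)^m>0$. Choose $K\in\mathbb{N}$, $K\ge 1$, with
\[
K\alpha(2)^m\ge \max_{U} A e^{\omega x}+|f(a)|+\varepsilon .
\]
Then \eqref{ew1} holds on $U$. Combining the three regions proves the lemma.

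The only real point needing care is the tail step: $m$ must be chosen depending on $\ell$ (which may be tiny) so that $m/\ell>\omega$. This is where exponential growth of $\alpha$ replaces the degree condition $2m>\deg p$ used in Lemma \ref{L6.1}. Everything else is a routine compactness bound.
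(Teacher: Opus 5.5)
Your proof is correct and follows the paper's argument essentially step for step. You use the same choice of $\ell$ from continuity at $a$, the same condition $m>\omega\ell$ (equivalently $m/\ell>\omega$) to handle the tail beyond some $M$, and the same choice of $K$ on $U=[0,a-2\ell]\cup[a+2\ell,M]$, only making the bounds explicit via $\alpha(t)\ge \tfrac14 e^{t}$ for $t\ge\ln 4$ and $\alpha\ge\alpha(2)$ on $U$. One small quibble with your closing remark: the constraint $m>\omega\ell$ matters only when $\ell$ is large, since for tiny $\ell$ already $m=1$ works.
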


\begin{proof} Let \(a>0\). There exists \(0<\ell \le \dfrac{a}{2}\) such that
\begin{equation}\label{ew2}
	f(a)+\varepsilon \ge f(x), \quad x \in (a-2\ell, a+2\ell). 
\end{equation}

Let \(m \in \mathbb{N}\), \(m > \omega \ell\). Then
\[
\lim_{x \to \infty}
\frac{\alpha^{m}\!\left(\frac{x-a}{\ell}\right) + f(a)+\varepsilon}{A  \exp_{\omega}(x)} = \infty.
\]

So, there exists \(M>0\) such that
\begin{equation}\label{ew3}
	\alpha^{m}\!\left(\frac{x-a}{\ell}\right) + f(a)+\varepsilon \ge f(x), \quad x > M. 
\end{equation}

Set
\[
U := [0, a-2\ell] \cup [a+2\ell, M].
\]
Take \(K \in \mathbb{N}\) with
\begin{equation}\label{ew4}
	\min \left\{ K\, \alpha^{m}\!\left(\frac{x-a}{\ell}\right) + f(a)+\varepsilon \; \middle|\; x \in U \right\}
	\ge
	\max \left\{ A  \exp_{\omega}(x) \; \middle|\; x \in U \right\}. 
\end{equation}

Then (\ref{ew1}) is a consequence of (\ref{ew2})–(\ref{ew4}). The proof is similar for  $a = 0$.
\end{proof}

\medskip

\begin{proof}[Proof of Theorem~\protect\ref%
	{T5.8}]  
	Fix \(a\ge 0\) and \(\varepsilon>0\).
	
	\noindent Since
	\[
	f(x)\le A\exp_\omega(x),\qquad x\ge 0,
	\]
	Lemma 3.2 yields a function
	\[
	G(x)=K\alpha^m\!\left(\frac{x-a}{\ell}\right)+f(a)+\varepsilon
	\]
	such that \(G(x)\ge f(x)\) for all \(x\ge 0\).
	
\noindent	Since \(\alpha(x)=\cosh(x)-1\), the function \(G\) is a finite linear combination of exponentials, hence \(G\in D\). By positivity,
	\[
	L_n f(a)\le L_n G(a).
	\]
	Using the assumption \(\displaystyle\lim_{n\to\infty}L_n\exp_\lambda(a)= \exp_\lambda(a)\) for every \(\lambda\in\mathbb R\), we obtain
	\[
	\displaystyle\lim_{n\to\infty}L_nG(a)= G(a)=f(a)+\varepsilon,
	\]
	and therefore
	\[
	\limsup_{n\to\infty}L_nf(a)\le f(a)+\varepsilon.
	\]
	Applying Lemma 3.2 to \(-f\), we similarly obtain
	\[
	\liminf_{n\to\infty}L_nf(a)\ge f(a)-\varepsilon.
	\]
		Hence
	\[
	f(a)-\varepsilon
	\le
	\liminf_{n\to\infty}L_nf(a)
	\le
	\limsup_{n\to\infty}L_nf(a)
	\le
	f(a)+\varepsilon.
	\]
	Letting \(\varepsilon\to0\), we conclude that
	\[
	\displaystyle\lim_{n\to\infty}L_nf(a)= f(a).
	\]
	Since \(a\ge0\) was arbitrary, the convergence is pointwise on \([0,\infty)\).
	\end{proof}

\begin{corollary}\label{cor5.9} Let \( f \in C[0,\infty) \), \( |f(x)| \le A \exp_{\omega}( x) \), \( x \ge 0 \), for some \( A>0 \) and \( \omega>0 \). Then \( S_n f \to f \) pointwise on \([0,\infty)\).\end{corollary}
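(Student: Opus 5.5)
The plan is to apply Theorem~\ref{T5.8} with $L_n=S_n$ and $D$ the subspace of $C[0,\infty)$ consisting of all functions of at most exponential growth, i.e.\ $|g(x)|\le B e^{\mu x}$ for some $B,\mu>0$. This $D$ is a linear space and contains every $\exp_\lambda$, $\lambda\in\mathbb R$: for $\lambda\le 0$ we have $e^{\lambda x}\le 1\le e^{x}$, and for $\lambda>0$ take $\mu=\lambda$. Moreover $S_n$ is well defined, linear and positive on $D$, because the series $e^{-nx}\sum_k g(k/n)(nx)^k/k!$ is dominated by $B e^{-nx}\sum_k e^{\mu k/n}(nx)^k/k!$, which converges. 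So the only hypothesis of Theorem~\ref{T5.8} left to check is $S_n\exp_\lambda\to\exp_\lambda$ pointwise for every $\lambda\in\mathbb R$.

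This is a direct computation with the exponential series:
\[
S_n\exp_\lambda(x)=e^{-nx}\sum_{k=0}^\infty e^{\lambda k/n}\frac{(nx)^k}{k!}=e^{-nx}\exp\bigl(nx\,e^{\lambda/n}\bigr)=\exp\bigl(nx(e^{\lambda/n}-1)\bigr).
\]
For fixed $x\ge0$ we have $n(e^{\lambda/n}-1)\to\lambda$ as $n\to\infty$, since $(e^{t}-1)/t\to1$ as $t\to0$. Hence $S_n\exp_\lambda(x)\to e^{\lambda x}$ for all $\lambda\in\mathbb R$ and $x\ge0$, and Theorem~\ref{T5.8} gives $S_nf\to f$ pointwise.

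The main point needing care is choosing the domain $D$ so that $S_n$ actually maps $D$ into $C[0,\infty)$. Continuity of $S_nf$ follows from local uniform convergence of the series: for $0\le x\le R$ it is dominated by $B\sum_k e^{\mu k/n}(nR)^k/k!<\infty$, so the Weierstrass M-test applies. Everything else is routine.
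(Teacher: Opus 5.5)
Your proposal is correct and follows essentially the same route as the paper: compute $S_n\exp_\lambda(x)=\exp\bigl(nx(e^{\lambda/n}-1)\bigr)\to e^{\lambda x}$ and apply Theorem~\ref{T5.8}. The paper omits your verification that the exponential-growth space $D$ contains all $\exp_\lambda$ and that $S_n$ maps $D$ positively into $C[0,\infty)$, so those checks are a welcome addition rather than a different approach.
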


\begin{proof} For all $\lambda\in  {\mathbb R}$ and $x\geq 0$, we have
	\[S_n \exp_{\lambda}(x) = e^{n x (e^{\lambda/n}-1)} \to \exp_{\lambda}(x)\quad (n \to \infty).\]
	An application of Theorem \ref{T5.8} concludes the proof. \end{proof}

\end{document}